\renewcommand{\le}{\leqslant}
\renewcommand{\ge}{\geqslant}  
\newcommand{\clot}[1]{\bar{#1}}  
\newcommand{\card}[1]{\# #1}  
\newcommand{\N}{\mathbb{N}}  
\newcommand{\Z}{\mathbb{Z}}  
\newcommand{\K}{\mathbb{K}}  
\newcommand{\U}{\mathbb{U}}  
\newcommand{\F}{\mathbb{F}}  
\newcommand{\Q}{\mathbb{Q}}  
\newcommand{\isom}{\cong}  
\newcommand{\frob}{\varphi}  
\DeclareMathOperator{\Gal}{Gal}  
\newcommand{\euler}{\phi}  
\DeclareMathOperator{\ord}{ord}  
\newcommand{\0}{\mathcal{O}}  
\newcommand{\isog}[1]{\mathcal{#1}}  
\newcommand{\I}{\isog{I}}  
\newcommand{\frobisog}{\phi}  
\newcommand{\tildO}{\tilde{O}}  
\newcommand{\Mult}{\mathrm{\sf M}}  
\newcommand{\ModComp}{\mathrm{\sf C}}  
\newcommand{\alg}[1]{{\sf #1}}  
\newtheorem{definition}{Definition}
\newtheorem{theorem}{Theorem}
\newtheorem{proposition}[definition]{Proposition}
\journal{Journal of Number Theory}
\begin{document}

\begin{frontmatter}

\title{Fast algorithms for computing isogenies between ordinary
  elliptic curves in small characteristic}
\author{Luca De Feo}
\ead{luca.defeo@polytechnique.edu}
\address{LIX, {\'E}cole Polytechnique, 91128 Palaiseau, France}

\begin{abstract}
  The problem of computing an explicit isogeny between two given
  elliptic curves over $\F_q$, originally motivated by point counting,
  has recently awaken new interest in the cryptology community thanks
  to the works of Teske and Rostovstev \& Stolbunov.

  While the large characteristic case is well understood, only
  suboptimal algorithms are known in small characteristic; they are
  due to Couveignes, Lercier, Lercier \& Joux and Lercier \& Sirvent.
  In this paper we discuss the differences between them and run some
  comparative experiments. We also present the first complete
  implementation of Couveignes' second algorithm and present
  improvements that make it the algorithm having the best asymptotic
  complexity in the degree of the isogeny.
\end{abstract}

\begin{keyword}
  Elliptic curves \sep Isogenies \sep Cryptography \sep Algorithms
\end{keyword}

\end{frontmatter}

\section{Introduction}

The problem of computing an explicit degree $\ell$ isogeny between two
given elliptic curves over $\F_q$ was originally motivated by point
counting methods based on Schoof's algorithm \cite{Atk91},
\cite{Elk91}, \cite{Sch95}. A review of the most efficient algorithms
to solve this problem is given in \cite{BoMoSaSc08} together with a
new quasi-optimal algorithm; however, all the algorithms presented in
\cite{BoMoSaSc08} are limited to the case $\ell\ll p$ where $p$ is the
characteristic of $\F_q$. This is satisfactory for cryptographic
applications where one takes $p=q$ or $p=2$; indeed in the former case
Schoof's algorithm needs $\ell\in O(\log p)$, while in the latter case
there's no need to compute explicit isogenies since $p$-adic methods
based on \cite{Sat00} are preferred to Schoof's algorithm.

Nevertheless, the problem of computing explicit isogenies in the case
where $p$ is small compared to $\ell$ remains of theoretical interest
and can find practical applications in newer cryptosystems such as
\cite{Tes06}, \cite{RoSt06}. The first algorithm to solve this problem
was given by Couveignes and made use of formal groups \cite{Cou94}; it
takes $\tildO(\ell^3\log q)$ operations in $\F_p$ assuming $p$ is
constant, however it has an exponential complexity in $\log
p$. Another algorithm by Lercier specific to $p=2$ uses some linear
properties of the problem to build a linear system from whose solution
the isogeny can be deduced \cite{Ler96}; its complexity is conjectured
to be $\tildO(\ell^3\log q)$ operations in $\F_p$, but it has a much
better constant factor than \cite{Cou94}. At the moment we write, the
latter algorithm is by many orders of magnitude the fastest algorithm
to solve practical instances of the problem when $p=2$, thus being the
\emph{de facto} standard for cryptographic use.

$p$-adic methods were used by Joux and Lercier \cite{JL06} and Lercier
and Sirvent \cite{LeSi09} to solve the isogeny problem. The former
method has complexity $\tildO(\ell^2(1 + \ell/p)\log q)$ operations in
$\F_p$, which makes it well adapted to the case where $p\sim\log
q$. The latter has complexity $\tildO(\ell^3 + \ell\log q^2)$
operations in $\F_p$, making it the best algorithm to our knowledge
for the case where $p$ is not constant.

\paragraph{The algorithm C2 and its variants}
Finally, the algorithm having the best asymptotic complexity in $\ell$
was proposed again by Couveignes in \cite{Cou96}; we will refer to
this original version as ``C2''\footnote{As opposed to the algorithm
  presented in \cite{Cou94}, an algorithm ``C2'' shares many
  similarities with.}. Its complexity --supposing $p$ is fixed-- was
estimated in \cite{Cou96} as being $\tildO(\ell^2\log q)$ operations
in $\F_p$, but with a precomputation step requiring $\tildO(\ell^3\log
q)$ operations and large memory requirements. However, some more work
is needed to effectively reach these bounds, while a straightforward
implementation of C2 has an overall asymptotic complexity of
$\tildO(\ell^3\log q)$ operations, as we will argue in Section
\ref{sec:C2}.

Subsequent work by Couveignes \cite{Cou00}, and more recently
\cite{DFS09}, use Artin-Schreier theory to avoid the precomputation
step of C2 and drop the memory requirements to $\tildO(\ell\log q +
\log^2 q)$ elements of $\F_p$. However, this is still not enough to
reduce the overall complexity of the algorithm, as we will argue in
Section \ref{sec:C2-AS}. We refer to this variant as ``C2-AS''.

In the present paper we give a complete review of Couveignes'
algorithm, we present new variants that reach the foreseen quadratic
bound in $\ell^2$ and prove an accurate complexity estimate which
doesn't suppose $p$ to be fixed. We also run experiments to compare
the performances of C2 with other algorithms.

\paragraph{Notation and plan}
In the rest of the paper $p$ is a prime, $d$ a positive integer,
$q=p^d$ and $\F_q$ is the field with $q$ elements. For an elliptic
curve $E$ and a field $\K$ embedded in an algebraic closure
$\clot{\K}$, we note by $E(\K)$ the set of $\K$-rational points and by
$E[m]$ the $m$-torsion subgroup of $E(\clot{\K})$. The group law on
the elliptic curve is noted additively, its zero is the point at
infinity, noted $\0$. For an affine point $P$ we note by $x(P)$ its
abscissa and by $y(P)$ its ordinate. We will restrict ourselves to the
case of ordinary elliptic curves, thus $E[p^k]\isom\Z/p^k\Z$.

Unless otherwise stated, all time complexities will be measured in
number of operations in $\F_p$ and all space complexities in number of
elements of $\F_p$; we do not assume $p$ to be constant. We use the
$O$, $\Theta$ and $\Omega$ notations to state respectively upper
bounds, tight bounds and lower bounds for asymptotic complexities. We
also use the notation $\tildO_x$ that forgets polylogarithmic factors
in the variable $x$, thus $O(xy\log x \log y)\subset\tildO_x(xy\log
y)\subset\tildO_{x,y}(xy)$. We simply note $\tildO$ when the variables
are clear from the context.

We let $2<\omega\le3$ be the exponent of linear algebra, that is an
integer such that $n\times n$ matrices can be multiplied in $n^\omega$
operations. We let $\Mult:\N\rightarrow\N$ be a \emph{multiplication
  function}, such that polynomials of degree at most $n$ with
coefficients in $\F_p$ can be multiplied in $\Mult(n)$ operations,
under the conditions of \cite[Ch. 8.3]{vzGG}. Typical orders of
magnitude are $O(n^{\log_23})$ for Karatsuba multiplication or
$O(n\log n\log\log n)$ for FFT multiplication. Similarly, we let
$\ModComp:\N\rightarrow\N$ be the complexity of \emph{modular
  composition}, that is a function such that $\ModComp(n)$ is the
number of field operations needed to compute $f\circ g\bmod h$ for
$f,g,h\in\K[X]$ of degree at most $n$ with coefficients in an
arbitrary field $\K$. The best known algorithm is~\cite{BrKu78}, this
implies $\ModComp(n)\in O\left(n^{\frac{\omega+1}{2}}\right)$. Note
that in a boolean RAM model, the algorithm of~\cite{KeUm08} takes
quasi-linear time.

\paragraph{Organisation of the paper}
In Section~\ref{sec:preliminaries} we give preliminaries on elliptic
curves and isogenies. In Sections~\ref{sec:C2}
through~\ref{sec:C2-AS-FI-MC} we develop the algorithm C2 and we
incrementally improve it by giving a new faster variant in each
Section. Section~\ref{sec:implementation} gives technical details on
our implementations of the algorithms of this paper and
of~\cite{LeSi09}. Finally in Section~\ref{sec:benchmarks} we comment
the results of the experiments we ran on our implementations.

%

\section{Preliminaries on Isogenies}
\label{sec:preliminaries}

Let $E$ be an ordinary elliptic curve over the field $\F_q$. We
suppose it is given to us as the locus of zeroes of an affine
Weierstrass equation
\[y^2 + a_1xy + a_3y = x^3 + a_2x^2 + a_4x + a_6
\qquad a_1,\ldots,a_6\in\F_q\text{.}\]

\paragraph{Simplified forms} If $p>3$ it is well known that the curve
$E$ is isomorphic to a curve in the form
\begin{equation}
  \label{eq:weierstrass>3}
  y^2 = x^3 + ax + b
\end{equation}
and its $j$-invariant is $j(E) = \frac{1728(4a)^3}{16(4a^3 + 27b^2)}$.

When $p=3$, since $E$ is ordinary, it is isomorphic to a curve
\begin{equation}
  \label{eq:weierstrass=3}
  y^2 = x^3 + ax^2 + b
\end{equation}
and its $j$-invariant is $j(E) = -\frac{a^3}{b}$.

Finally, when $p=2$, since $E$ is ordinary, it is isomorphic to a curve
\begin{equation}
  \label{eq:weierstrass=2}
  y^2 + xy = x^3 + ax^2 + b
\end{equation}
and its $j$-invariant is $j(E) = \frac{1}{b}$.

These isomorphism are easy to compute and we will always assume that
the elliptic curves given to our algorithms are in such simplified
forms.

\paragraph{Isogenies}
Elliptic curves are endowed with the classic group structure through
the chord-tangent law. A group morphism having finite kernel is called
an \emph{isogeny}. Isogenies are regular maps, as such they can be
represented by rational functions. An isogeny is said to be
$\K$-rational if it is $\K$-rational as regular map; its degree is the
degree of the regular map.

One important property about isogenies is that they factor the
multiplication-by-$m$ map.

\begin{definition}[Dual isogeny]
  Let $\I : E \rightarrow E'$ be a degree $m$ isogeny. There exists an
  unique isogeny $\hat{\I} : E' \rightarrow E$, called the \emph{dual
    isogeny} such that
  \[\I\circ\hat{\I} = [m]_E \qquad\text{and}\qquad \hat{\I}\circ\I =
  [m]_{E'}\]
\end{definition}

As regular maps, isogenies can be separable, inseparable or purely
inseparable. In the case of finite fields, purely inseparable
isogenies are easily understood as powers of the frobenius map. Let
\[E^{(p)} : y^2 + a_1^pxy + a_3^py = x^3 + a_2^px^2 + a_4^px + a_6^p\]
then the map
\begin{align*}
  \frobisog : E &\rightarrow E^{(p)}\\
          (x,y) &\mapsto (x^p,y^p)
\end{align*}
is a degree $p$ purely inseparable isogeny. Any purely inseparable
isogeny is a composition of such frobenius isogenies.

Let $E$ and $E'$ be two elliptic curves defined over $\F_q$, by
finding an \emph{explicit isogeny} we mean to find an
($\F_q$-rational) rational function from $E(\clot{\F}_q)$ to
$E'(\clot{\F}_q)$ such that the map it defines is an isogeny.

\begin{figure}
  \centering
  \[\xymatrix{
    E \ar[r]^{[m]}\ar@/_1pc/[rrr]_{\I'} & E \ar[r]^\I & E' \ar[r]^{\frobisog^n} & E'^{(p^n)}\\
  }\]
  \label{fig:fact}
  \caption{Factorization of an isogeny. $\I'$ has kernel $E[m]\oplus\ker\I$.}
\end{figure}

Since an isogeny can be uniquely factored in the product of a
separable and a purely inseparable isogeny, we focus ourselves on the
problem of computing explicit separable isogenies. Furthermore one can
factor out multiplication-by-$m$ maps, thus reducing the problem to
compute explicit separable isogenies with cyclic kernel (see figure
\ref{fig:fact}).

In the rest of this paper, unless otherwise stated, by $\ell$-isogeny
we mean a separable isogeny with kernel isomorphic to $\Z/\ell\Z$.

\paragraph{Vélu formulae}
For any finite subgroup $G \subset E(\clot{\K})$, Vélu formulae
\cite{Vel71} give in a canonical way an elliptic curve $\bar{E}$ and
an explicit isogeny $\I:E\rightarrow \bar{E}$ such that
$\ker\I=G$. The isogeny is $\K$-rational if and only if the polynomial
vanishing on the abscissae of $G$ belongs to $\K[X]$.

In practice, if $E$ is defined over $\F_q$ and if
\[h(X) = \prod_{\substack{P\in G\\P\ne\0}}(X - x(P)) \in \F_q[X]\]
is known, Vélu formulae compute a rational function
\begin{equation}
  \label{eq:isog}
  \bar{\I}(x,y) = \left(\frac{g(x)}{h(x)}, \frac{k(x,y)}{l(x)}\right)  
\end{equation}
and a curve $\bar{E}$ such that $\bar{\I} : E\rightarrow\bar{E}$ is an
$\F_q$-rational isogeny of kernel $G$. A consequence of Vélu formulae
is
\begin{equation}
  \label{eq:velu-deg}
  \deg g = \deg h + 1 = \card{G}
  \text{.}
\end{equation}

Given two curves $E$ and $E'$, Vélu formulae reduce the problem of
finding an explicit isogeny between $E$ and $E'$ to that of finding
the kernel of an isogeny between them. Once the polynomial $h(X)$
vanishing on $\ker\I$ is found, the explicit isogeny is computed
composing Vélu formulae with the isomorphism between $\bar{E}$ and
$E'$ as in figure \ref{fig:velu}.

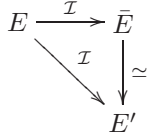
\begin{figure}
  \centering
  \[\xymatrix{
    E \ar[r]^{\bar{\I}} \ar[rd]^\I & \bar{E} \ar[d]^{\simeq}\\
    & E'
  }\]
  \caption{Using Vélu formulae to compute an explicit isogeny.}
  \label{fig:velu}
\end{figure}

%

\section{The algorithm C2}
\label{sec:C2}

The algorithm we refer to as C2 was originally proposed in
\cite{Cou96}. It takes as input two elliptic curves $E, E'$ and an
integer $\ell$ prime to $p$ and it returns, if it exists, an
$\F_q$-rational isogeny of degree $\ell$ between $E$ and $E'$. It only
works in odd characteristic.

\subsection{The original algorithm}
Suppose there exists an $\F_q$-rational isogeny
$\I:E\rightarrow E'$ of degree $\ell$. Since $\ell$ is prime to $p$
one has $\I(E[p^k]) = E'[p^k]$ for any $k$.

Recall that $E[p^k]$ and $E'[p^k]$ are cyclic groups. C2 iteratively
computes generators $P_k,P_k'$ of $E[p^k]$ and $E'[p^k]$
respectively. Now C2 makes the guess $\I(P_k) = P_k'$; then, if $\I$
is given by rational fractions as in \eqref{eq:isog},
\begin{equation}
  \label{eq:C2:I}
  \frac{g\bigl(x([i]P_k)\bigr)}{h\bigl(x([i]P_k)\bigr)} = x([i]P_k')
  \quad\text{for $i\in\Z/p^k\Z$} 
\end{equation}
and by \eqref{eq:velu-deg} $\deg g = \deg h + 1 = \ell$.

Using \eqref{eq:C2:I} one can compute the rational fraction
$\frac{g(X)}{h(X)}$ through Cauchy interpolation over the points of
$E[p^k]$ for $k$ large enough. C2 takes $p^k > 4\ell - 2$,
interpolates the rational fraction and then checks that it corresponds
to the restriction of an isogeny to the $x$-axis. If this is the case,
the whole isogeny is computed through Vélu formulae and the algorithm
terminates. Otherwise the guess $\I(P_k) = P_k'$ was wrong, then C2
computes a new generator for $E'[p^k]$ and starts over again.

We now go through the details of the algorithm.

\paragraph{The $p$-torsion}
The computation of the $p$-torsion points follows from the work of
Gunji \cite{Gun76}. Here we suppose $p\ne2$.

\begin{definition}
  \label{def:hasse}
  Let $E$ have equation $y^2 = f(x)$. The \emph{Hasse invariant} of
  $E$, noted $H_E$, is the coefficient of $X^{p-1}$ in
  $f(X)^{\frac{p-1}{2}}$.
\end{definition}

Gunji shows the following proposition and gives formulae to compute
the $p$-torsion points.

\begin{proposition}
  \label{th:gunji}
  Let $c=\sqrt[p-1]{H_E}$; then, the $p$-torsion points of $E$ are
  defined in $\F_q[c]$ and their abscissae are defined in $\F_q[c^2]$.
\end{proposition}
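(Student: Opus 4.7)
The plan is to combine the étaleness of $E[p]$ with a precise identification of the Galois action in terms of $H_E$. Since $E$ is ordinary, $E[p](\clot{\F}_q) \isom \Z/p\Z$, and the absolute Galois group $\Gal(\clot{\F}_q/\F_q)$, topologically generated by the $q$-Frobenius $\sigma$, acts through a character $\chi:\Gal(\clot{\F}_q/\F_q)\to\F_p^\times$. The field of definition of $E[p]$ is therefore the unique extension of $\F_q$ whose degree equals the multiplicative order of $\chi(\sigma)$, a divisor of $p-1$. Moreover, since $x(P) = x(-P)$ for every $P\in E[p]$, the abscissae are stabilised by $\{\pm 1\}\subset\F_p^\times$ and hence live in the unique index-$2$ subfield of that field of definition.

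The crux of the proof is the identification
\[
\chi(\sigma) \;=\; \Norm_{\F_q/\F_p}(H_E) \quad \text{inside } \F_p^\times.
\]
I would derive this from the classical interpretation of $H_E$ as the scalar by which the $p$-th power Frobenius acts on the one-dimensional space $H^1(E,\mathcal{O}_E)$, an equivalent restatement of Definition~\ref{def:hasse} obtained through $y^{p-1} = f(x)^{(p-1)/2}$. Iterating this action $d=\log_p q$ times produces the norm above, and the duality between Frobenius and Verschiebung, together with the ordinary splitting of the $p$-divisible group of $E$, identifies that iterated scalar with the Galois action on the étale part of $E[p]$.

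Granted this identification, the field-theoretic conclusion is immediate. Writing $c^{p-1} = H_E\in\F_q^\times$, one computes
\[
\sigma(c) \;=\; c^q \;=\; c\cdot(c^{p-1})^{(q-1)/(p-1)} \;=\; c\cdot H_E^{(q-1)/(p-1)} \;=\; c\cdot \Norm_{\F_q/\F_p}(H_E) \;=\; c\cdot \chi(\sigma),
\]
so $\sigma^k(c) = c$ iff $\chi(\sigma)^k = 1$. Hence $\F_q[c]$ coincides with the field of definition of $E[p]$, and the same calculation applied to $c^2$ singles out the index-$2$ subfield containing the abscissae. Gunji's explicit formulae then realise $y(P)$ and $x(P)$ as, respectively, $c$ times an $\F_q[c^2]$-rational expression and a polynomial in $c^2$.

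The main obstacle is the second step: making the identification $\chi(\sigma) = \Norm_{\F_q/\F_p}(H_E)$ fully rigorous. One route is Dieudonné-theoretic, matching the unit root of crystalline Frobenius modulo $p$ with $H_E$; the alternative, followed by Gunji, works directly with the reduction modulo $p$ of the $p$-division polynomial, identifying its non-trivial factor of degree $(p-1)/2$ whose leading coefficient is a controlled power of $H_E$. Either approach requires careful bookkeeping of the $(p-1)/2$ exponent coming from $y^{p-1}$, after which the rest of the argument is purely elementary field theory.
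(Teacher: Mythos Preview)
The paper does not give its own proof of this proposition; it simply attributes the result to Gunji and refers to~\cite{Gun76} for the explicit formulae. Gunji's original argument works directly with the reduction modulo~$p$ of the $p$-division polynomial, exhibiting its nontrivial factor as a polynomial of degree $(p-1)/2$ whose coefficients are explicit rational functions in $H_E$ and the Weierstrass coefficients; this simultaneously yields the field-of-definition statement and the explicit coordinates of the torsion points that the algorithm needs.

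Your Galois-character approach is sound and more conceptual. The identity $\chi(\sigma)=\Norm_{\F_q/\F_p}(H_E)$ you isolate is indeed the crux, and it is classical: combining the fact that the $q$-Frobenius acts on $E[p]$ as multiplication by $t\bmod p$ (exactly what the paper uses later in the proof of Proposition~\ref{th:tower}) with the Manin--Hasse--Witt congruence $t\equiv\Norm_{\F_q/\F_p}(H_E)\pmod p$ gives what you need, after which your Kummer computation for $\sigma(c)$ pins down $\F_q[c]$ as precisely the splitting field, not merely a field containing it. The trade-off is that your route establishes the field of definition without producing the coordinates of the points, whereas Gunji's explicit computation delivers both at once---and it is the coordinates that the algorithm actually consumes.
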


\paragraph{The $p^k$-torsion}
$p^k$-torsion points are iteratively computed via $p$-descent. The
basic idea is to split the multiplication map as $[p] = \frobisog\circ
V$ and invert each of the components. The purely inseparable isogeny
$\frobisog$ is just a frobenius map and the separable isogeny $V$ can
be computed by Vélu formulae once the $p$-torsion points are
known. Although this is reasonably efficient, pulling $V$ back may
involve factoring polynomials of degree $p$ in some extension field.

A finer way to do the $p$-descent, as suggested in the original paper
\cite{Cou96}, is to use the work of Voloch \cite{Vol90}. Suppose
$p\ne2$, let $E$ and $\widetilde{E}$ have equations respectively
\begin{align*}
  y^2&=f(x)=x^3+a_2x^2+a_4x+a_6 \;\text{,}\\
  \tilde{y}^2&=\tilde{f}(\tilde{x}) = \tilde{x}^3 +
  \sqrt[p]{a_2}\tilde{x}^2 + \sqrt[p]{a_4}\tilde{x} + \sqrt[p]{a_6}
  \;\text{,}
\end{align*}
set
 \begin{equation}
  \label{eq:voloch:cover}
  \tilde{f}(X)^{\frac{p-1}{2}} = \alpha(X) + H_{\widetilde{E}}X^{p-1} + X^p\beta(X)
\end{equation}
with $\deg \alpha < p-1$ and $H_{\widetilde{E}}$ the Hasse invariant
of $\widetilde{E}$. Voloch shows the following proposition.

\begin{proposition}
  \label{th:voloch}
  Let $\tilde{c} = \sqrt[p-1]{H_{\widetilde{E}}}$, the cover of
  $\widetilde{E}$ defined by
  \begin{equation}
    \label{th:voloch:cover}
    C:\; \tilde{z}^p - \tilde{z} = \frac{\tilde{y}\beta(\tilde{x})}{\tilde{c}^p}
  \end{equation}
  is an étale cover of degree $p$ and is isomorphic to $E$ over
  $\F_q[\tilde{c}]$; the isomorphism is given by
  \begin{equation}
    \label{th:voloch:isom}
    \left\{
      \begin{aligned}
        (\tilde{x}, \tilde{y}) &= V(x, y)\\
        \tilde{z} &= -\frac{y}{\tilde{c}^p}\sum_{i=1}^{p-1}\frac{1}{x - x([i]P_1)}
      \end{aligned}
    \right.
  \end{equation}
  where $P_1$ is a primitive $p$-torsion point of $E$.
\end{proposition}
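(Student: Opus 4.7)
The plan is to identify $C\to\widetilde{E}$ with the Verschiebung $V:E\to\widetilde{E}$, so that the formulae~\eqref{th:voloch:isom} merely exhibit coordinates on $E$ realizing the Artin--Schreier equation~\eqref{th:voloch:cover}. Since the coefficients of $\tilde{f}$ are $p$-th roots of those of $f$, we have $\widetilde{E}^{(p)}=E$, so the Frobenius $\varphi:\widetilde{E}\to E$ is purely inseparable of degree~$p$ and its dual $V=\hat\varphi:E\to\widetilde{E}$ is a separable degree-$p$ isogeny with kernel $E[p]=\langle P_1\rangle$. In particular $V$ already exhibits $E$ as a separable degree-$p$ cover of $\widetilde{E}$; it suffices to show that the coordinate $\tilde{z}$ proposed in~\eqref{th:voloch:isom} satisfies the claimed Artin--Schreier relation over $\F_q[\tilde{c}]$.

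First, I would make $V$ explicit through Vélu's formulae applied to the subgroup $\langle P_1\rangle$. These yield $\tilde{x} = V_x(x)$ as a rational function of $x$ alone and $\tilde{y} = y\,V_y(x)$, with target curve isomorphic to $\widetilde{E}$ over the field of definition of $P_1$. Combining Proposition~\ref{th:gunji} with the identity $f(X^p)=\tilde{f}(X)^p$ (a direct Frobenius calculation from $a_i=\tilde{a}_i^p$) gives $H_E = H_{\widetilde{E}}^p$ and hence $c=\tilde{c}^p$; consequently $E[p]$ is defined over $\F_q[\tilde{c}]$, which is the natural field of definition of the whole construction.

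Second, I would plug the candidate $\tilde{z} = -\tfrac{y}{\tilde{c}^p}\sum_{i=1}^{p-1}\tfrac{1}{x-x([i]P_1)}$ into $\tilde{z}^p-\tilde{z}$. Using $y^2=f(x)$ and the Frobenius identity $\bigl(\sum_i(x-x_i)^{-1}\bigr)^p = \sum_i(x^p-x_i^p)^{-1}$, the $\tilde{z}^p$ term becomes $-\tfrac{y\,f(x)^{(p-1)/2}}{\tilde{c}^{p^2}}\sum_i(x^p-x([i]P_1)^p)^{-1}$. Substituting $f(X^p)=\tilde{f}(X)^p$ and invoking the decomposition~\eqref{eq:voloch:cover} to split $\tilde{f}^{(p-1)/2}$ into its $\alpha$, $H_{\widetilde{E}}X^{p-1}$ and $X^p\beta$ components allows one to rewrite the result in the Vélu coordinates $(\tilde{x},\tilde{y})$. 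The contributions from $\alpha$ and $H_{\widetilde{E}}X^{p-1}$ are designed to cancel against $-\tilde{z}$, leaving precisely $\tilde{y}\,\beta(\tilde{x})/\tilde{c}^p$.

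The main obstacle is this last identity: producing the partial-fraction rearrangement that converts $\sum_i(x^p-x([i]P_1)^p)^{-1}$ into an expression in $\tilde{x}$ involving $\tilde{f}(\tilde{x})^{(p-1)/2}$, and identifying the piece which cancels the low-degree part of the Artin--Schreier expansion. This amounts to a residue computation on $\widetilde{E}$ at the $p$ geometric preimages of a generic point under $V$, where Gunji's explicit formulae for the abscissae of $E[p]$ and the normalization $\tilde{c}^{p-1}=H_{\widetilde{E}}$ enter decisively. Once the equation is verified, both étaleness (automatic since $\partial_{\tilde{z}}(\tilde{z}^p-\tilde{z})=-1$ is a unit) and the isomorphism $C\isom E$ over $\F_q[\tilde{c}]$ follow formally.
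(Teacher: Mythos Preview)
The paper does not actually prove this proposition: it is stated as a result of Voloch, introduced by the sentence ``Voloch shows the following proposition'' and attributed to~\cite{Vol90}, with no proof environment following. There is therefore no in-paper argument to compare your proposal against.

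As a standalone sketch, your plan is reasonable in outline---identify $C\to\widetilde{E}$ with the Verschiebung and verify the Artin--Schreier relation by direct computation---but you openly flag the crux (the partial-fraction and residue identity that collapses $\tilde{z}^p-\tilde{z}$ to $\tilde{y}\beta(\tilde{x})/\tilde{c}^p$) as an ``obstacle'' without carrying it out. That identity is the entire content of the proposition; the rest (étaleness from $\partial_{\tilde{z}}(\tilde{z}^p-\tilde{z})=-1$, the field of definition via $c=\tilde{c}^p$) is indeed formal. If you want to complete this, you would need to consult Voloch's original argument in~\cite{Vol90}, which proceeds via the invariant differential and an explicit antiderivative rather than the raw partial-fraction manipulation you propose.
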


The descent is then performed as follows: starting from a point $P$ on
$E$, first pull it back along $\frobisog$, then take one of its
pre-images in $C$ by solving equation \eqref{th:voloch:cover}, finally
use equation \eqref{th:voloch:isom} to land on a point $P'$ in $E$.
The proposition guarantees that $[p]P' = P$. The descent is pictured
in figure \ref{fig:voloch}.

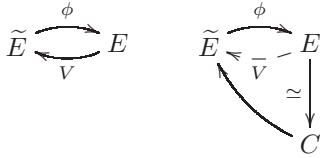
\begin{figure}
  \centering
  \[
  \xymatrix{\widetilde{E}\ar@/^/[r]^{\frobisog} & E\ar@/^/[l]^{V}}
  \qquad
  \xymatrix{
    \widetilde{E}\ar@/^/[r]^{\frobisog} & E\ar@/^/@{-->}[l]^{V}\ar[d]_{\simeq}\\
    & C\ar@/^/[ul]
  }
  \]
  
  \caption{Two ways of doing the $p$-descent: standard on the left and via a degree $p$ cover on the right}
  \label{fig:voloch}
\end{figure}

The reason why this is more efficient than a standard descent is the
shape of equation \eqref{th:voloch:cover}: it is an Artin-Schreier
equation and it can be solved by many techniques, the simplest being
linear algebra (as was suggested in \cite{Cou96}). Once a solution
$\tilde{z}$ to \eqref{th:voloch:cover} is known, solving in $x$ and $y$ the
bivariate polynomial system \eqref{th:voloch:isom} takes just a GCD
computation (explicit formulae were given by Lercier in
\cite[$\S$6.2]{Ler97}, we give some slightly improved ones in Section
\ref{sec:implementation}). Compare this with a generic factoring
algorithm needed by standard descent.

Solving Artin-Schreier equations is the most delicate task of the
descent and we will further discuss it.

\paragraph{Cauchy interpolation}
Interpolation reconstructs a polynomial from the values it takes on
some points; Cauchy interpolation reconstructs a rational
fraction. The Cauchy interpolation algorithm is divided in two phases:
first find the polynomial $P$ interpolating the evaluation points,
then use rational fraction reconstruction to find a rational fraction
congruent to $P$ modulo the polynomial vanishing on the points. The
first phase is carried out through any classical interpolation
algorithm, while the second is similar to an XGCD computation. See
\cite[$\S$5.8]{vzGG} for details.

Cauchy interpolation needs $n+2$ points to reconstruct a degree
$(k,n-k)$ rational fraction. This, together with \eqref{eq:velu-deg},
justifies the choice of $k$ such that $p^k > 4\ell - 2$. Some of our
variants of C2 will interpolate only on the primitive $p^k$-torsion
points, thus requiring the slightly larger bound $\euler(p^k) \ge
4\ell - 2$. This is not very important to our asymptotical analysis
since in both cases $p^k \in O(\ell)$.

\paragraph{Recognising the isogeny}
Once the rational fraction $\frac{g(X)}{h(X)}$ has been computed, one
has to verify that it is indeed an isogeny. The first test is to check
that the degrees of $g$ and $h$ match equation \eqref{eq:velu-deg}, if
they don't, the equation can be discarded right away and the algorithm
can go on with the next trial. Next, one can check that $h$ is indeed
the square of a polynomial (or, if $\ell$ is even, the product of one
factor of the $2$-division polynomial and a square polynomial). This
two tests are usually enough to detect an isogeny, but, should they
lie, one can still check that the resulting rational function is
indeed a group morphism by trying some random points on $E$.

\subsection{The case $p=2$}
\label{sec:p=2}
The algorithm as we have presented it only works when $p\ne2$, it is
however an easy matter to generalise it. The only phase that doesn't
work is the computation of the $p^k$-torsion points. For curves in the
form \eqref{eq:weierstrass=2} the only $2$-torsion point is
$(0,\sqrt{b})$.

Voloch formulae are hard to adapt, nevertheless a $2$-descent on the
Kummer surface of $E$ can easily be performed since the doubling
formula reads
\begin{equation}
  x([2]P) = \frac{b}{x(P)^2} + x(P)^2 =
  \frobisog\left(\frac{\sqrt{b} + x(P)^2}{x(P)} \right) = \frobisog\circ V
  \;\text{.}
\end{equation}
Given point $x_P$ on $K_E$, a pull-back along $\frobisog$ gives a
point $\tilde{x}_P$ on $K_{\widetilde{E}}$. Then pulling $V$ back
amounts to solve
\begin{equation}
  \label{eq:2-descent}
  x^2 + \tilde{x}_Px = \sqrt{b}
\end{equation}
and this can be turned in an Artin-Schreier equation through the
change of variables $x \rightarrow x'\tilde{x}_P$.

From the descent on the Kummer surfaces one could deduce a full
$2$-descent on the curves by solving a quadratic equation at each step
in order recover the $y$ coordinate, but this would be too
expensive. Fortunately, the $y$ coordinates are not needed by the
subsequent steps of the algorithm, thus one may simply ignore
them. Observe in fact that even if $K_E$ does not have a group law,
the restriction of scalar multiplication is well defined and can be
computed through Montgomery formulae \cite{Mon87}. This is enough to
compute all the abscissae of the points in $E[p^k]$ once a generator
is known.

\subsection{Complexity analysis}
\label{sec:C2:complexity}
Analysing the complexity of C2 is a delicate matter since the
algorithm relies on some black-box computer algebra algorithms in
order to deal with finite extensions of $\F_q$. The choice of the
actual algorithms may strongly influence the overall complexity of C2.
In this section we will only give some lower bounds on the complexity
of C2, since a much more accurate complexity analysis will be carried
out in Section \ref{sec:C2-AS}.

\paragraph{$p$-torsion}
Applying Gunji formulae first requires to find $c$ and $c'$, $p-1$-th
roots of $H_E$ and $H_{E'}$, and build the field extension $\F_q[c] =
\F_q[c']$. Independently of the actual algorithm used, observe that in
the worst case $\F_q[c]$ is a degree $p-1$ extension of $\F_q$, thus
simply representing one of its elements requires $\Theta(pd)$ elements
of $\F_p$.

Subsequently, the main cost in Gunji's formulae is the computation of
the determinant of a $\frac{p-1}{2}\times\frac{p-1}{2}$
quadri-diagonal matrix (see \cite{Gun76}). This takes $\Theta(p^2)$
operations in $\F_q[c]$ by Gauss elimination, that is no less than
$\Omega(p^3d)$ operations in $\F_p$.

\paragraph{$p^k$-torsion}
During the $p$-descent, factoring of equations \eqref{th:voloch:cover}
or \eqref{eq:2-descent} may introduce some field extensions over
$\F_q[c]$. Observe that an Artin-Schreier polynomial is either
irreducible or totally split, so at each step of the $p$-descent we
either stay in the same field or we take a degree $p$ extension. This
shows that in the worst case, we have to take an extension of degree
$p^{k-1}$ over $F_q[c]$. The following proposition, which is a
generalisation of \cite[Prop. 26]{Ler97}, states precisely how likely
this case is.

\begin{proposition}
  \label{th:tower}
  Let $E$ be an elliptic curve over $\F_q$, we note $\U_i$ the
  smallest field extension of $\F_q$ such that $E[p^i]\subset
  E(\U_i)$. For any $i\ge1$, either $[\U_{i+1}:\U_i] = p$ or
  $\U_{i+1}=\U_i=\cdots=\U_1$.
\end{proposition}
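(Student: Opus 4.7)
The plan is to convert the statement into a purely $p$-adic arithmetic question about a single unit and then argue by induction.

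First, since $E$ is ordinary, the $q$-power Frobenius $\pi$ acts faithfully on the $p$-adic Tate module $T_p(E) \isom \Z_p$ as multiplication by some unit $\lambda \in \Z_p^\ast$ (the unit root of the characteristic polynomial of $\pi$). It follows that $\pi^n$ fixes $E[p^i]$ pointwise iff $\lambda^n \equiv 1 \pmod{p^i}$, and therefore $[\U_i : \F_q] = n_i$ where $n_i$ denotes the multiplicative order of $\lambda \bmod p^i$ in $(\Z/p^i\Z)^\ast$.

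The proposition then reduces to two claims about the sequence $(n_i)_{i \ge 1}$: \emph{(i)} $n_i \mid n_{i+1}$ and $n_{i+1}/n_i \in \{1, p\}$; and \emph{(ii)} once the ratio reaches $p$ it stays there, i.e.\ if $n_{k+1} = p n_k$ for some $k \ge 1$ then $n_{k+2} = p n_{k+1}$. Both rest on a single binomial estimate: writing $\lambda^{n_k} = 1 + a p^k$ with $a \in \Z_p$, I want $(1 + a p^k)^p \equiv 1 + a p^{k+1} \pmod{p^{k+2}}$. Since $v_p\bigl(\binom{p}{j}\bigr) = 1$ for $1 \le j \le p-1$, each cross term $\binom{p}{j} a^j p^{jk}$ with $2 \le j \le p$ has $p$-adic valuation at least $k+2$ when $p$ is odd and $k \ge 1$. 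This immediately gives (i). For (ii), the hypothesis $n_{k+1} \ne n_k$ forces $a \not\equiv 0 \pmod p$, and the identity then shows $\lambda^{n_{k+1}} \not\equiv 1 \pmod{p^{k+2}}$, so $n_{k+2} > n_{k+1}$, which by (i) must equal $p n_{k+1}$.

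With (i) and (ii) in hand, the proposition follows by contrapositive: if $[\U_{i+1} : \U_i] \ne p$, then (i) gives $n_{i+1} = n_i$; iterating the contrapositive of (ii) then rules out any earlier index $j \le i$ with $n_{j+1}/n_j = p$, since such growth would propagate all the way up to level $i+1$. Hence $n_1 = \cdots = n_{i+1}$, i.e.\ $\U_{i+1} = \cdots = \U_1$. The delicate point is the binomial estimate, whose sharpness relies essentially on $p$ being odd --- consistent with the rest of Section~\ref{sec:C2}, where Voloch's $p$-descent is developed under that assumption.
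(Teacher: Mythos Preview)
Your proof is correct and follows essentially the same route as the paper's: both identify $[\U_i:\F_q]$ with the multiplicative order $n_i$ of the Frobenius unit $\lambda\in\Z_p^\ast$ modulo $p^i$, and then use the implication $n_{k+1}=n_k\Rightarrow n_k=n_{k-1}$ to cascade down. The only difference is that you prove this implication directly via the binomial identity $(1+ap^k)^p\equiv 1+ap^{k+1}\pmod{p^{k+2}}$ (and rightly flag that it needs $p$ odd), whereas the paper outsources that step to \cite[Corollary~4]{Ler97}.
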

\begin{proof}
  Observe that the action of the Frobenius $\frobisog$ on $E[p]$ is
  just multiplication by the trace $t$, in fact the equation
  \[\frobisog^2 - [t \bmod p]\circ\frobisog + [q \bmod p] = 0\]
  has two solutions, namely $[t \bmod p]$ and $[0 \bmod p]$, but the
  second can be discarded since it would imply that $\frobisog$ has
  non-trivial kernel.  By lifting this solution, one sees that the
  action of $\frobisog$ on the Tate module $\mathcal{T}_p(E)$ is equal
  to multiplication by some $\tau\in\Z_p$.

  Note $G$ the absolute Galois group of $\F_q$, there is a well known
  action of $G$ on $\mathcal{T}_p(E)$. Since $G$ is generated by the
  Frobenius automorphism of $\F_q$, the restriction of this action to
  $E[p^k]$ is equal to the action (via multiplication) of the subgroup
  of $(\Z/p^k\Z)^\ast$ generated by $\tau_k = \tau \bmod p^k$. Hence
  $[\U_k:\F_q] = \ord(\tau_k)$.

  Then, for any $k>1$, \cite[Corollary 4]{Ler97} applied to
  $\tau_{k+1}=\tau\bmod p^{k+1}$ shows that
  $\ord(\tau_{k+1})=\ord(\tau_k)$ implies
  $\ord(\tau_k)=\ord(\tau_{k-1})$ and this concludes the proof.
\end{proof}

Thus for any elliptic curve there is an $i_0$ such that $[\U_i:\U_1] =
p^{i-i_0}$ for any $i \ge i_0$. This shows that the worst and the
average case coincide since for any fixed curve $[\U_k:\U_1] \in
\Theta(p^k)$ asymptotically. In this situation, one needs
$\Theta(p^kd)$ elements of $\F_p$ to store an element of $\U_k$.

Now the last iteration of the $p$-descent needs to solve an
Artin-Schreier equation in $\U_k$. To do this C2 precomputes the
matrix of the $\F_q$-linear application $(x^q-x):\U_k\rightarrow\U_k$
and its inverse, plus the matrix of the $\F_p$-linear application
$(x^p-x):\F_q\rightarrow\F_q$ and its inverse. The former is the most
expensive one and takes $\Theta(p^{\omega k})$ operations in $\F_q$,
that is $\Omega(p^{\omega k}d) = \Omega(\ell^\omega d)$ operations in
$\F_p$, plus a storage of $\Theta(\ell^2d)$ elements of
$\F_p$. Observe that this precomputation may be used to compute any
other isogeny with domain $E$.

After the precomputation has been done, C2 successively applies the
two inverse matrices; details can be found in
\cite[$\S$2.4]{Cou96}. This costs at least $\Omega(\ell^2d)$.

\paragraph{Interpolation}
The most expensive part of Cauchy interpolation is the polynomial
interpolation phase. In fact, simply representing a polynomial of
degree $p^k-1$ in $\U_k[X]$ takes $\Theta(p^{2k}d)$ elements, thus at
least $\Omega(\ell^2d)$ operations are needed to interpolate unless
special care is taken. This contribution due to arithmetics in $\U_k$
had been underestimated in the complexity analysis of \cite{Cou96},
which gave an estimate of $\Omega(\ell d\log\ell)$ operations for this
phase. We will give more details on interpolation in Section
\ref{sec:C2-AS-FI}.

\paragraph{Recognising the isogeny}
The cost of testing for squareness of the denominator and other tests
is negligible compared to the rest of the algorithm. Nevertheless it
is important to realize that on average half of the $\euler(p^k)$
mappings from $E[p^k]$ to $E'[p^k]$ must be tried before finding the
isogeny, for only one of these mappings corresponds to it. This
implies that the Cauchy interpolation step must be repeated an average
of $\Theta(p^k)$ times, thus contributing a $\Omega(\ell^3d)$ to the
total complexity.

Summing up all the contributions one ends up with the following lower
bound
\begin{equation}
  \label{eq:C2:complexity}
  \Omega(\ell^3d + p^3d)
\end{equation}
plus a precomputation step whose cost is negligible compared to this
one and a space requirement of $\Theta(\ell^2d)$ elements. In the next
sections we will see how to make all these costs drop.




%

\section{The algorithm C2-AS}
\label{sec:C2-AS}

One of the most expensive steps of C2 is the resolution of an
Artin-Schreier equation in an extension field $\U_i$. In \cite{Cou00}
Couveignes gives an approach alternative to linear algebra to solve
this problem. First it builds the whole tower $(\U_1=\F_q[c], \ldots,
\U_k)$ of intermediate extensions, then it solves an Artin-Schreier
equation in $\U_i$ recursively by reducing it to another
Artin-Schreier equation in $\U_i$. Details are in \cite{Cou00} and
\cite{DFS09}.

To solve the final Artin-Schreier equation in $\U_1=\F_q[c]$ one
resorts to linear algebra, thus precomputing the inverse matrix of the
$\F_p$-linear application $(x^p-x):\U_1\rightarrow\U_1$.

\subsection{Complexity analysis}
\label{sec:C2-AS:complexity}
How effective this method is depends on the way algebra is performed
in the tower $(\U_1,\ldots,\U_k)$. The present author and Schost
\cite{DFS09} recently presented a new construction based on
Artin-Schreier theory that allows to do most arithmetic operations in
the tower in quasi-linear time. Assuming this construction is used, we
can now give precise bounds for each step of C2-AS.

\paragraph{$p$-torsion}
The construction of $\F_q[c]$ may be done in many ways. The only
requirements of \cite{DFS09} are
\begin{enumerate}
\item that its elements have a representation as elements of
  $F_p[X]/Q_1(X)$ for some irreducible polynomial $Q_1$,
\item that either $(d,p)=1$ or $\deg Q_1' + 2 = \deg Q_1$.
\end{enumerate}
Selecting a random polynomial $Q_1$ and testing for irreducibility is
usually enough to meet these conditions. This costs
$O\bigl(pd\Mult(pd)\log (pd)\log(p^2d)\bigr)$ according to
\cite[Th. 14.42]{vzGG}.

Now we need to compute the embedding $\F_q\subset\F_q[c]$. Supposing
$\F_q$ is represented as $\F_p[X]/Q_0(X)$, we factor $Q_0$ in
$\F_q[c]$, which costs $O\bigl(pd\Mult(pd^2)\log d\log p\bigr)$ using
\cite[Coro. 14.16]{vzGG}. Then the most naive technique to express the
embedding is linear algebra. This requires the computation of $pd$
elements of $\F_q[c]$ at the expense of $\Theta\bigl(pd\Mult(pd)\bigr)$
operations in $\F_p$, then the inversion of the matrix holding such
elements, at a cost of $\Theta\bigl((pd)^\omega\bigr)$ operations. This is
certainly not optimal, yet this phase will have negligible cost
compared to the rest of the algorithm.

Now we can compute $c$ and $c'$ by factoring the polynomials
$Y^{p-1}-H_E$ and $Y^{p-1}-H_{E'}$ in $\F_p[X]/Q_1(X)$. This costs
\[O\bigl((p\ModComp(pd) + \ModComp(p)\Mult(pd) + \Mult(p)\Mult(pd)\log
p)(\log^2 p+\log d)\bigr)\]
using \cite[Section 3]{KS97}.

Finally, computing the determinants needed by Gunji's formulae takes
$\Theta(p^2)$ multiplications in $\F_q[c]$, that is
$\Theta\bigl(p^2\Mult(pd)\bigr)$.

Letting out logarithmic factors, the overall cost of this phase is
\begin{equation}
  \label{eq:gunji-complexity}
  \tildO\bigl(p^2d^3 + p\ModComp(pd) + \ModComp(p)pd + (pd)^\omega \bigr)
\end{equation}

\paragraph{$p^k$-torsion}
Application of Voloch formulae requires at each of the levels
$\U_2,\ldots,\U_k$
\begin{enumerate}
\item to solve equation \eqref{th:voloch:cover} by factoring an
  Artin-Schreier polynomial,
\item to solve the system \eqref{th:voloch:isom}.
\end{enumerate}
If we assume the worst case $[\U_2:\U_1] = p$, according to
\cite[Th. 13]{DFS09}, at each level $i$ the first step costs
\begin{gather*}
  O\bigl((pd)^\omega i + {\sf PT}(i-1) + \Mult(p^{i+1}d)\log p\bigr)\\[0.3cm]
  \begin{aligned}
    \text{where}&&
    {\sf PT}(i) &= O\bigl((pi + \log(d))i{\sf L}(i) + p^i\ModComp(pd)\log^2(pd)\bigr)\\
    \text{and}&&
    {\sf L}(i) &= O\bigl(p^{i+2}d\log_p^2{p^{i+1}d} + p\Mult(p^{i+1}d)\bigr)
    \text{ ;}
  \end{aligned}
\end{gather*}
while the second takes the GCD of two degree $p$ polynomials in
$\U_i[X]$ for each $i$ (see Section \ref{sec:implementation}), at a
cost of $O\bigl(\Mult(p^{i+1}d)\log p\bigr)$ operations using a fast
algorithm \cite[$\S$11.1]{vzGG}.

Summing up over $i$, the total cost of this phase up to logarithmic
factors is
\begin{equation}
  \label{eq:C2-AS:complexity:p^k}
  \tildO_{p,d,\log\ell}\left((pd)^\omega \log_p^2\ell + p^2\ell d\log_p^4\ell +
  \frac{\ell}{p}\ModComp(pd)\right)
  \;\text{.}  
\end{equation}
Also notice that there is no more need to store a $p^{k-1}d\times
p^{k-1}d$ matrix to solve the Artin-Schreier equation, thus the space
requirements are not anymore quadratic in $\ell$.

\paragraph{Interpolation}
The interpolation phase is not essentially changed: one needs first to
interpolate a degree $p^k-1$ polynomial with coefficients in $\U_k$,
then use \cite[\alg{Push-down}]{DFS09} to obtain the corresponding
polynomial in $\F_q[X]$ and finally do a rational fraction
reconstruction.

The first step costs $O\bigl(\Mult(p^{2k}d)\log p^k\bigr)$ using fast techniques
as \cite[$\S$10.2]{vzGG}, then converting to $\F_q[c][X]$ takes
$O\bigl(p^k{\sf L}(k-1)\bigr)$ by \cite{DFS09} and further converting to
$\F_q[X]$ takes $\Theta\bigl((pd)^2\bigr)$ by linear algebra. The rational function
reconstruction then takes $O\bigl(\Mult(p^kd)\log p^k\bigr)$ using
fast GCD techniques \cite[$\S$11.1]{vzGG}.

The overall complexity of one interpolation is then
\begin{equation}
  \label{eq:C2-AS:complexity:interp}
  O\bigl(\Mult(\ell^2d)\log_p\ell + \ell{\sf L}(k-1) + (pd)^2\bigr)
  \;\text{.}
\end{equation}
Remember that this step has to be repeated an average number of
$\euler(p^k)/4$ times, thus the dependency of C2-AS in $\ell$ is still cubic.

%

\section{The algortihm C2-AS-FI}
\label{sec:C2-AS-FI}

The most expensive step of C2-AS is the polynomial interpolation step
which is part of the Cauchy interpolation. If we use a standard
interpolation algorithm, its input consists in a list of $\Theta(p^k)$
pairs $\bigl(P, \I(P)\bigr)$ with $P\in\U_k$, thus a lower bound for
any such algorithm is $\Omega(p^{2k}d)$. Notice however that the
output is a polynomial of degree $\Theta(p^k)$ in $\F_q[X]$, hence, if
supplied with a shorter input, an \emph{ad hoc} algorithm could reach
the bound $\Omega(p^kd)$.

In this section we give an algorithm that reaches this bound up to
some logarithmic factors. It realizes the polynomial interpolation on
the primitive points of $E[p^k]$, thus its output is a degree
$\euler(p^k)/2-1$ polynomial in $\F_q[X]$. Using the Chinese remainder
theorem, it is straightforward to generalise this to an algorithm
having the same asymptotic complexity realizing the polynomial
interpolation on all the points of $E[p^k]$. We call C2-AS-FI the
variant of C2-AS resulting from applying this new algorithm.

\subsection{The algorithm}
We set some notation. Let $i_0$ be the largest index such that
$\U_{i_0} = \U_1$ and let $\frac{p-1}{2r} = [\F_q[c^2]:\F_q]$. For
notational convenience, we set $\U_0=\F_q$.

We note $T(X)$ the polynomial vanishing on the primitive points of
$E[p^k]$ and

\begin{equation}
  \label{eq:T}
  T = \prod T_j^{(i)}
\end{equation}
its factorisation over $\U_i$; we remark that all the $T_j^{(0)}$'s
have degree $\frac{\euler(p^{k-i_0+1})}{2r}$. We also note $A(X)$ the goal
polynomial and
\begin{equation}
  \label{eq:A}
  A_j^{(i)} = A \bmod T_j^{(i)}
  \;\text{.}
\end{equation}

It was already pointed out in \cite[$\S$2.3]{Cou96} that if all the
$A_j^{(0)}$'s are known one can recover $A$ using the Chinese remainder
theorem. If we chose any point $P$ such that
$T_j^{(0)}\bigl(x(P)\bigr)=0$ and fix the embedding
\begin{equation}
  \label{eq:embed}
  \xymatrix{
    ^{\F_q[X]}/_{T_j^{(0)}(X)} \ar@{^{(}->}[r]^-\iota & \U_k
  }
\end{equation}
given by $\iota(X) = x(P)$, then it is evident that
$\iota\bigl(A_j^{(0)}(X)\bigr) = x\bigl(\I(P)\bigr)$, thus in order to
compute $A_j^{(0)}$ one just needs to compute
$\iota^{-1}\bigl(x(\I(P))\bigr)$.

Unfortunately, the information needed to compute $\iota$ was lost in
the $p$-descent, for we don't even know the $T_j^{(i)}$'s. None of the
algorithms of \cite{DFS09} helps us to compute such information and
straightforward computation of it would be too expensive. The solution
is to decompose $\iota$ as a chain of morphisms and invert them
one-by-one going down in the tower $(\U_0,\U_1,\ldots,\U_k)$, this is
similar to the way \cite{Cou00} solves an Artin-Schreier equation by
moving it down from $\U_k$ to $\U_1$.

\paragraph{The moduli}
We first need to compute $T_0^{(i)}\in\U_i[X]$ for any $i$. For this
we fix a primitive point $P\in E[p^k]$ and we reorder the indices in
\eqref{eq:T} so that $T_0^{(i)}$ is the minimal polynomial of $x(P)$
over $\U_i$.

The first minimal polynomial is simply
\begin{equation}
  \label{eq:T_0^k}
  T_0^{(k)}(X) = X - x(P)
  \;\text{.}
\end{equation}
Now suppose we know $T_0^{(i+1)}$, then a generator $\sigma$ of
$\Gal(\U_{i+1}/\U_i)$ acts on the roots of $T_0^{(i+1)}$ sending them
on the roots of some $T_j^{(i+1)}$. Then the
minimal polynomial of $x(P)$ over $\U_i$ is
\begin{equation}
  \label{eq:T_0^i}
  T_0^{(i)} = \prod_{\sigma\in\Gal(\U_{i+1}/\U_i)} \sigma\left(T_0^{(i+1)}\right)
  \;\text{.}
\end{equation}
Some care has to be taken when computing $T_0^{(0)}$: in fact the
abscissae of the points may be counted twice if
$c\not\in\F_q[c^2]$. In this case only a subgroup of index $2$ of
$\Gal(\U_1/\U_0)$ must be used instead of the whole group.

\paragraph{The interpolation}
The computation of $A_0^{(i)}$ is done in the same recursive way. Fix
the same point $P$ used to compute the $T_0^{(i)}$'s and fix the chain
of embeddings
\begin{equation}
  \xymatrix{
    ^{\U_0[X_0]}/_{T_0^{(0)}(X_0)} \ar@{^{(}->}[r]^-{\iota_0} &
    \;\cdots\; \ar@{^{(}->}[r]^-{\iota_{k-1}} &
    ^{\U_k[X_k]}/_{T_0^{(k)}(X_k)} \ar@{^{(}->}[r]^-{\iota_k} &
    \U_k
  }
\end{equation}
given by $\iota_k\circ\cdots\circ\iota_i(X_i) = x(P)$ for any $i$.

We compute $A_0^{(i)}$ by inverting the chain: inverting $\iota_k$
simply gives
\begin{equation}
  \label{eq:A_0^k}
  A_0^{(k)} = x\bigl(\I(P)\bigr)
  \;\text{.}
\end{equation}
Then suppose we know $A_0^{(i+1)}$, and decompose the embedding
$\iota_i$ as
\begin{equation}
  \xymatrix{
    ^{\U_i[X_i]}/_{T_0^{(i)}(X_i)} \ar@{^{(}->}[r]^-{\iota_i} \ar@{^{(}->}[d]^{\varepsilon} &
    ^{\U_{i+1}[X_{i+1}]}/_{T_0^{(i+1)}(X_{i+1})} \\
    ^{\U_{i+1}[Y]}/_{T_0^{(i)}(Y)} \ar@{^{(}->>}[r]^-{\gamma} &
    \bigoplus_j {}^{\U_{i+1}[Y_{j}]}/_{T_j^{(i+1)}(Y_{j})} \ar@{->>}[u]_{\pi}
  }
\end{equation}
where $\varepsilon$ is the canonical injection extending
$\U_i\subset\U_{i+1}$, $\gamma$ is the Chinese remainder isomorphism
and $\pi$ is projection onto the first coordinate.

To invert $\pi$ observe that any $\sigma\in\Gal(\U_{i+1}/\U_i)$ leaves
$A_0^{(i)}$ invariant while it permutes the moduli $T_j^{(i+1)}$, thus
\begin{equation}
  A_0^{(i)} \equiv \sigma\left(A_0^{(i+1)}\right)
  \bmod \sigma\left(T_0^{(i+1)}\right)
  \;\text{;}
\end{equation}
Hence we can obtain all the $A_j^{(i+1)}$ through the action of
$\Gal(\U_{i+1}/\U_i)$ on $A_0^{(i+1)}$.

Then we can invert $\gamma$ through a Chinese remainder algorithm
\cite[$\S$10.3]{vzGG} and $\varepsilon$ by converting coefficients from
$\U_{i+1}$ to $\U_i$.

As for the moduli, a special treatment is needed for $\iota_0$ if
$c\not\in\F_q[c^2]$.

\subsection{Complexity analysis}
\label{sec:C2-AS-FI:complexity}

The two algorithms for computing the $T_{0}^{(i)}$'s and the
$A_{0}^{(i)}$'s are very similar and run in parallel. We can merge
them in one unique algorithm, at each level $i\ge i_0$ it does the
following

\begin{enumerate}
\item for $\sigma \in \Gal(\U_{i+1}/\U_i)$, call $\bar\sigma$ the
  permutation it induces on the indices of the $T_j^{(i+1)}$'s,
  compute
  \begin{enumerate}
  \item\label{alg:T:gal} $T_{\bar\sigma(0)}^{(i+1)} :=
    \sigma\left(T_0^{(i+1)}\right)$ and
  \item\label{alg:A:gal} $A_{\bar\sigma(0)}^{(i+1)} :=
    \sigma\left(A_0^{(i+1)}\right)$ using
    \cite[\alg{IterFrobenius}]{DFS09},
  \end{enumerate}
\item\label{alg:T:prod} compute $T_0^{(i)}$ through a subproduct tree
  as in \cite[Algo. 10.3]{vzGG},
\item\label{alg:A:CRA} compute $A_0^{(i)}$ through Chinese Reminder
  Algorithm \cite[Algo. 10.16]{vzGG},
\item\label{alg:T:push} convert $T_0^{(i)}$ and $A_0^{(i)}$ into
  elements of $\U_i[X]$ using \cite[\alg{Push-down}]{DFS09}.
\end{enumerate}

Steps \ref{alg:T:gal} and \ref{alg:A:gal} are identical. Both are
repeated $p$ times, each iteration taking $O\bigl(p^{k-i}{\sf
  L}(i-i_0)\bigr) \subset O\bigl({\sf L}(k-i_0)\bigr)$ by
\cite[Th. 17]{DFS09}.

Step \ref{alg:T:prod} takes $O\bigl(\Mult(p^{k-i_0+1}d/r)\log p\bigr)$
by \cite[Lemma 10.4]{vzGG} and step \ref{alg:A:CRA} has the same
complexity by \cite[Coro. 10.17]{vzGG}.

Step \ref{alg:T:push} takes $O\bigl(p^{k-i+1}{\sf L}(i-i_0)\bigr)
\subset O\bigl(p{\sf L}(k-i_0)\bigr)$.

When $i=0$ and $\U_1\ne\F_q$ the algorithm is identical but steps
\ref{alg:T:gal} and \ref{alg:A:gal} must be computed through a generic
frobenius algorithm (using~\cite[Algorithm 5.2]{vzGS92}, for example)
and step \ref{alg:T:push} must use the implementation of $F_q[c]$ to
make the conversion (for example, linear algebra). In this case steps
\ref{alg:T:gal} and \ref{alg:A:gal} cost
$\Theta\bigl(\frac{p^{k-i_0}}{r}\ModComp(pd)\log d \bigr)$
by~\cite[Lemma 5.3]{vzGS92} and step \ref{alg:T:push} costs
$\Theta\bigl(p^{k-i_0}(pd)^2\bigr)$.

The total cost of the algorithm is then
\begin{equation*}
  \label{eq:T:complexity}
  O\left(\bigl(k-i_0\bigr)\bigl(p{\sf L}(k-i_0) + \Mult(p^{k-i_0+1}d/r)\log p\bigr) +
    \frac{p^{k-i_0}}{r}\bigl(\ModComp(pd)\log d + r(pd)^2\bigr) \right)
  \;\text{.}
\end{equation*}

After all, the whole algorithm looks a lot like fast interpolation
\cite[$\S$10]{vzGG} and it is indeed a modified version of it. A
similar algorithm was already given in \cite{EnMo03}.

\paragraph{The complete interpolation}
We compute all the $A_j^{(0)}$'s using this algorithm; there's
$p^{i_0-1}r$ of them. We then recombine them through a Chinese
remainder algorithm at a cost of $O\bigl(\Mult(p^kd)\log
p^{i_0-1}r\bigr)$. The total cost of the whole interpolation phase is
then
\begin{equation*}
  O\left(\bigl(k-i_0\bigr) \bigl(p{\sf L}(k) + \Mult(p^kd)\log p\bigr) +
    p^{k-1}\ModComp(pd)\log d + p^{k-1}r(pd)^2 + i_0\Mult(p^kd)\log p
  \right)
  \;\text{,}
\end{equation*}
that is
\begin{equation}
  \label{eq:interp}
  O\left(p{\sf L}(k)\log\left(\frac{\ell}{p^{i_0}}\right) + 
    \Mult(\ell d)\log\ell\log p +
    \frac{\ell}{p}\ModComp(pd)\log d +
    \ell (pd)^2
  \right)
  \;\text{.}
\end{equation}

Alternatively, once $A_0^{(0)}$ is known, one could compute the other
$A_j^{(0)}$'s using modular composition with the multiplication maps
of $E$ and $E'$ as suggested in \cite{Cou96}. However this approach
doesn't give a better asymptotic complexity because in the worst case
$A_0^{(0)}=A$. From a practical point of view, though, Brent's and
Kung's algorithm for modular composition \cite{BrKu78}, despite having
a worse asymptotic complexity, could perform faster for some set of
parameters. We will discuss this matter in Section
\ref{sec:C2-AS-FI-MC}.

If more than $\euler(p^k)/2$ points are needed, but less than
$\frac{p-1}{2}$, one can use the previous algorithm to compute all the
polynomials $A_i$ interpolating respectively over the $p^i$-torsion
points of $E$ and $E'$. They can then be recombined through a Chinese
remainder algorithm at a cost of $O\bigl(\Mult(p^kd)\log p^k\bigr)$,
which doesn't change the overall complexity of C2-AS-FI.

Putting together the complexity estimates of C2-AS and C2-AS-FI, we
have the following.

\begin{theorem}
  \label{th:complexity}
  Assuming $\Mult(n) = n\log n\log\log n$, the algorithm C2-AS-FI has
  worst case complexity
  \begin{equation*}
    \tildO_{p,d,\log\ell}\left(
      p^2d^3 +
      \ModComp(p)pd +
      (pd)^\omega\log^2\ell +
      p^3\ell^2 d\log^3\ell + 
      p^2\ell^2 d^2+
      \left(\frac{\ell^2}{p} + p\right)\ModComp(pd)
    \right)
    \;\text{.}
  \end{equation*}
\end{theorem}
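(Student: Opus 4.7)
The plan is to add up the contributions from the three phases of the algorithm already analysed in Sections \ref{sec:C2-AS:complexity} and \ref{sec:C2-AS-FI:complexity}, and then to multiply the cost of one Cauchy interpolation by the $\Theta(p^k)=\Theta(\ell)$ trials required on average before hitting the correct guess for $\I(P_k)$, as observed in the \emph{Recognising the isogeny} paragraph of Section \ref{sec:C2:complexity}. The main substitution is $\Mult(n)=n\log n\log\log n$, together with $p^k\in\Theta(\ell)$, which lets us turn every factor of the form $\Mult(p^jd)$ or ${\sf L}(j)$ into an explicit polynomial in $p$, $\ell$, $d$, $\log\ell$; after this, the $\tildO_{p,d,\log\ell}$ notation absorbs every polylogarithmic factor in $p$, $d$ and $\log\ell$ (in particular $\log p$, $\log d$ and $\log\log\ell$ disappear, while $\log\ell$ itself remains visible).

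First, from the $p$-torsion estimate (\ref{eq:gunji-complexity}) I keep $p^2d^3$ and $\ModComp(p)pd$ as distinguished summands of the final bound; the piece $p\ModComp(pd)$ is dominated by the $\bigl(\ell^2/p+p\bigr)\ModComp(pd)$ contribution produced below, and $(pd)^\omega$ is dominated by $(pd)^\omega\log^2\ell$. Second, from the $p^k$-torsion estimate (\ref{eq:C2-AS:complexity:p^k}), the bound $(pd)^\omega\log_p^2\ell$ is cleared by $(pd)^\omega\log^2\ell$ (use $\log_p\ell\le\log\ell$), $p^2\ell d\log_p^4\ell$ is swallowed by the interpolation term $p^3\ell^2 d\log^3\ell$ below, and $\frac{\ell}{p}\ModComp(pd)$ by $\frac{\ell^2}{p}\ModComp(pd)$ below.

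For the interpolation, I take the per-trial bound (\ref{eq:interp}) and multiply by the $\Theta(\ell)$ trials. With $p^k\in\Theta(\ell)$ and the chosen $\Mult$, the recursive function satisfies
\[
{\sf L}(k) \;=\; O\!\bigl(p^{k+2}d\log_p^2(p^{k+1}d) + p\,\Mult(p^{k+1}d)\bigr) \;=\; \tildO_{p,d,\log\ell}\!\bigl(p^2\ell d\log^2\ell\bigr),
\]
so the dominant $p\,{\sf L}(k)\log(\ell/p^{i_0})$ part of (\ref{eq:interp}) contributes, after multiplication by $\ell$, a total of $\tildO_{p,d,\log\ell}(p^3\ell^2 d\log^3\ell)$. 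The remaining pieces of (\ref{eq:interp}) contribute $\tildO_{p,d,\log\ell}(\ell^2 d\log^2\ell)$ (from $\Mult(\ell d)\log\ell\log p$, dominated), $\tildO_{p,d,\log\ell}\!\bigl(\tfrac{\ell^2}{p}\ModComp(pd)\bigr)$ (from $\tfrac{\ell}{p}\ModComp(pd)\log d$), and $\ell^2(pd)^2=p^2\ell^2 d^2$ (from $\ell(pd)^2$). Gathering all the surviving summands gives exactly the bound claimed in the theorem.

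The step that requires the most care is not any single estimate but the consistent accounting across the three phases: one must check that every contribution of Sections \ref{sec:C2-AS:complexity} and \ref{sec:C2-AS-FI:complexity} is either listed explicitly in the final bound or strictly dominated by one that is, while keeping track of which logarithmic factors survive the $\tildO_{p,d,\log\ell}$ convention. Once this bookkeeping is laid out as above, no further ingredient is needed.
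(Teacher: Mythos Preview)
Your proof is correct and follows exactly the approach the paper intends: the paper does not spell out a proof at all but simply states ``Putting together the complexity estimates of C2-AS and C2-AS-FI, we have the following,'' and your write-up is precisely that bookkeeping made explicit. Your dominance checks (in particular for $p^2\ell d\log_p^4\ell$, for ${\sf L}(k)$, and for the $\ell$-fold iteration of \eqref{eq:interp}) are all sound under the $\tildO_{p,d,\log\ell}$ convention.
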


%

\section{The algorithm C2-AS-FI-MC}
\label{sec:C2-AS-FI-MC}

However asymptotically fast, the polynomial interpolation step is
quite expensive for reasonably sized data. Instead of repeating it
$\frac{\euler(p^k)}{2}$ times, one can use composition with the
Frobenius endomorphism $\frobisog_E$ in order to reduce the number of
interpolations in the final loop.

\subsection{The algorithm}
Suppose we have computed, by the algorithm of the previous Section,
the polynomial $T$ vanishing on the abscissae of $E[p^k]$ and an
interpolating polynomial $A_0\in\F_q[X]$ such that
\begin{equation*}
  A_0\bigl(x\bigl([n]P\bigr)\bigr) = x\bigl([n]P'\bigr)
  \quad\text{for any $n$.}
\end{equation*}
The group $\Gal(\U_k/\F_q) = \langle\frob\rangle$ acts on $E'[p^k]$
permuting its points and preserving the group structure. Thus, the
polynomial
\begin{equation*}
  A_1 = A_0\circ\frob = \frob\circ A_0
\end{equation*}
is an interpolating polynomial such that
\begin{equation*}
  A_1\bigl(x\bigl([n]P\bigr)\bigr) = x\bigl([n]\frobisog_{E'}(P')\bigr)
  \quad\text{for any $n$,}
\end{equation*}
where $\frobisog_{E'}$ is the Frobenius endomorphism of $E'$.  Since
$\frobisog_{E'}(P')$ is a generator of $E'[p^k]$, $A_1$ is one of the
polynomials that the algorithm C2 tries to identify to an isogeny. By
iterating this construction we obtain $[\U_k:\F_q]/2$ different
polynomials $A_i$ for the algorithm C2 with only interpolation.

To compute the $A_i$'s, we first compute $F\in\F_q[X]$
\begin{equation}
  \label{eq:frob}
  F(X) = X^q \bmod T(X)
  \text{,}
\end{equation}
then for any $1\le i<[\U_k:\F_q]/2$
\begin{equation}
  \label{eq:modcomp}
  A_i(X) = A_{i-1}(X)\circ F(X) \bmod T(X)\text{.}
\end{equation}

If $\frac{\euler(p^k)}{[\U_k:\F_q]} = p^{i_0-1}r$, we must compute
$p^{i_0-1}r$ polynomial interpolations and apply this algorithm to
each of them in order to deduce all the polynomials needed by C2.

\subsection{Complexity analysis}
We compute \eqref{eq:frob} via square-and-multiply, this costs
$\Theta(d\Mult(p^kd)\log p)$ operations. Each application of
\eqref{eq:modcomp} is done via a \emph{modular composition}, the cost
is thus $O(\ModComp(p^k))$ operations in $\F_q$, that is
$O(\ModComp(p^k)\Mult(d))$ operations in $\F_p$. Using the algorithm
of~\cite{KeUm08} for modular composition, the complexity of
C2-AS-FI-MC wouldn't be essentially different from the one of
C2-AS-FI; however, in practice the fastest algorithm for modular
composition is~\cite{BrKu78}, and in particular the variant
in~\cite[Lemma 3]{KS98}, which has a worse asymptotic complexity, but
performs better on the instances we treat in
Section~\ref{sec:benchmarks}.

Notice that a similar approach could be used inside the polynomial
interpolation step (see Section \ref{sec:C2-AS-FI}) to deduce
$A_k^{(0)}$ from $A_0^{(0)}$ using modular composition with the
multiplication maps of $E$ and $E'$ as described in
\cite[$\S$2.3]{Cou96}. This variant, though, has an even worse
complexity because of the cost of computing multiplication maps.

%

\section{Implementation}
\label{sec:implementation}

We implemented C2-AS-FI-MC as \texttt{C++} programs using the
libraries \texttt{NTL}~\cite{NTL} for finite field arithmetics,
\texttt{gf2x}~\cite{gf2x} for fast arithmetics in characteristic $2$
and \texttt{FAAST}~\cite{DFS09} for fast arithmetics in Artin-Schreier
towers.

This section mainly deals with some tricks we implemented in order to
speed up the computation. At the end of the section we briefly discuss
the implementation we made in Magma~\cite{Magma} of the algorithm
in~\cite{LeSi09}.

\subsection{Building $E[p^k]$ and $E'[p^k]$}
\label{sec:impl:torsion}

\paragraph{$p$-torsion}
For $p\ne2$, C2 and its variants require to build the extension
$\F_q[c]$ where $c$ is a $p-1$-th root of $H_E$. In order to deal with
the lowest possible extension degree, it is a good idea to modify the
curve so that $[\F_q[c]:\F_q]$ is the smallest possible.

$[\F_q[c]:\F_q]$ is invariant under isomorphism, but taking a twist
can save us a quadratic extension. Let $u=c^{-2}$, the curve
\begin{equation*}
  \bar{E} : y^2 = x^3 + a_2ux^2 + a_4u^2x + a_6u^3
\end{equation*}
is defined over $\F_q[c^2]$ and is isomorphic to $E$ over $\F_q[c]$
via $(x,y)\mapsto(\sqrt{u}^2x,\sqrt{u}^3y)$. Its Hasse invariant is
$H_{\bar{E}} = (u)^{\frac{p-1}{2}}H_E = 1$, thus its $p$-torsion
points are defined over $\F_q[c^2]$.

In order to compute the $p^k$-torsion points of $E$ we build
$\F_q[c^2]$, we compute $\bar{P}$ a $p^k$-torsion points of $\bar{E}$
using $p$-descent, then we invert the isomorphism to compute the
abscissa of $P\in E[p^k]$. Since the Cauchy interpolation only needs
the abscissae of $E[p^k]$, this is enough to complete the
algorithm. Scalar multiples of $P$ can be computed without knowledge
of $y(P)$ using Montgomery formulae \cite{Mon87}.

Remark that for $p=2$ we use the same construction in an implicit way
since we do a $p$-descent on the Kummer surface.

\paragraph{$p^k$-torsion points}
For $p\ne2$ we use Voloch's $p$-descent to compute the $p^k$-torsion
points iteratively as described in Section \ref{sec:C2}. To factor the
Artin-Schreier polynomial \eqref{th:voloch:cover}, we use the
algorithms from \cite{Cou00} and \cite{DFS09} that were analysed in
Section \ref{sec:C2-AS}. All these algorithms were provided by the
library \texttt{FAAST}.

To solve system \eqref{th:voloch:isom} we first compute
\begin{equation*}
  V(x,y) = \left(\frac{g(x)}{h^2(x)}, 
    sy\left(\frac{g(x)}{h^2(x)}\right)'\right)
\end{equation*}
through Vélu formulae.\footnote{Vélu formulae compute this isogeny up
  to an indeterminacy on the sign of the ordinate, the actual value of
  $s$ must be determined by composing $V$ with $\frobisog$ and
  verifying that it corresponds to $[p]$ by trying some random
  points.} Recall that we work on a curve having Hasse invariant $1$,
system \eqref{th:voloch:isom} can then be rewritten
\begin{equation*}
  \left\{
    \begin{aligned}
      X &= \frac{g(x)}{h^2(x)}\\
      Y &= sy\left(\frac{g(x)}{h^2(x)}\right)'\\
      Z &= -2y\frac{h'(x)}{h(x)}
    \end{aligned}
  \right.
\end{equation*}
where $(X,Y,Z)$ is the point on the cover $C$ that we want to pull
back. After some substitutions this is equivalent to
\begin{equation*}
  \left\{
    \begin{aligned}
      Xh^2(x) - g(x) &= 0\\
      \left(Xh^2(x) - g(x) - \frac{Y}{sZ}h^2(x)\right)' &= 0
    \end{aligned}
  \right.
\end{equation*}
Then a solution to this system is given by the GCD of the two
equations. Remark that proposition \ref{th:voloch} ensures there is
one unique solution. This formulae are slightly more efficient than
the ones in \cite[$\S$6.2]{Ler97}.

For $p=2$ we use the library \texttt{FAAST} (for solving
Artin-Schreier equations) on top of \texttt{gf2x} (for better
performance). There is nothing special to remark about the
$2$-descent.

\subsection{Cauchy interpolation and loop}
\label{sec:impl:cauchy}
The polynomial interpolation step is done as described in Section
\ref{sec:C2-AS-FI}. As a result of this implementation, the polynomial
interpolation algorithm was added to the library \texttt{FAAST}.

The rational fraction reconstruction is implemented using a fast XGCD
algorithm on top of \texttt{NTL} and \texttt{gf2x}. This algorithm was
added to \texttt{FAAST} too.

The loop uses modular composition as in Section~\ref{sec:C2-AS-FI-MC}
in order to minimise the number of interpolations. The timings in the
next section clearly show that this non-asymptotically-optimal variant
performs much faster in practice.

To check that the rational fractions are isogenies we test their
degrees, that their denominator is a square and that they act as group
morphisms on a fixed number of random points. All these checks take a
negligible amount of time compared to the rest of the algorithm.

\subsection{Parallelisation of the loop}
\label{parallel}

The most expensive step of C2-AS-FI-MC, in theory as well as in
practice, is the final loop over the points of $E'[p^k]$. Fortunately,
this phase is very easy to parallelise with very few overhead.

Let $n$ be the number of processors we wish to parallelise on, suppose
that $[\U_k:\F_q]$ is maximal, then we make only one interpolation
followed by $\euler(p^k)/2$ modular compositions.\footnote{If
  $[\U_k:\F_q]$ is not maximal, the parallelisation is straightforward
  as we simply send one interpolation to each processor in turn.} We
set $m=\left\lfloor\frac{\euler(p^k)}{2n}\right\rfloor$ and we compute
the action of $\frob^{m}$ on $E[p^k]$ as in
Section~\ref{sec:C2-AS-FI-MC}:
\begin{equation*}
  F^{(m)}(X) = F(X) \circ \cdots \circ F(X) \bmod T(X)
  \;\text{,}
\end{equation*}
this can be done with $\Theta(\log m)$ modular compositions via a
binary square-and-multiply approach as in~\cite[Algorithm
5.2]{vzGS92}.

Then we compute the $n$ polynomials
\begin{equation*}
  A_{mi}(X) = A_{m(i-1)}(X) \circ F^{(m)}(X) \bmod T(X)
\end{equation*}
and distribute them to the $n$ processors so that they each work on a
separate slice of the $A_i$'s. The only overhead is $\Theta(\log
(\ell/n))$ modular compositions with coefficients in $\F_q$, this is
acceptable in most cases.

\subsection{Implementation of~\cite{LeSi09}}
In order to compare our implementation with the state-of-the-art
algorithms, we implemented a Magma prototype of~\cite{LeSi09}; in what
follows, we will refer to this algorithm as LS. The algorithm
generalises~\cite{BoMoSaSc08} by lifting the curves in the $p$-adics
to avoid divisions by zero. Given two curves $E$ and $E'$ and an
integer $\ell$, it performs the following steps
\begin{enumerate}
\item Lift $E$ to $\bar{E}$ in $\Q_q$,
\item Lift the modular polynomial $\Phi_\ell$ to $\bar{\Phi}_\ell$ in
  $\Q_q$,
\item Find a root in $\Q_q$ of $\bar{\Phi}(X,j_{\bar{E}})$ that
  reduces to $j_{E'}$ in $\F_q$,
\item Apply~\cite{BoMoSaSc08} in $\Q_q$ to find an isogeny between
  $\bar{E}$ and $\bar{E}'$,
\item Reduce the isogeny to $\F_q$.
\end{enumerate}

We implemented this algorithm using Magma support for the
$p$-adics. Instead of the classical modular polynomials $\Phi_\ell$ we
used Atkin's canonical polynomials $\Phi^\ast_\ell$ since they have
smaller coefficients and degree; this does not change the other steps
of the algorithm. The modular polynomials were taken from the tables
precomputed in Magma.

The bottleneck of the algorithm is the use of the modular polynomial
as its bit size is $O(\ell^3)$, thus LS is asymptotically worse in
$\ell$ than C2. However the next section will show that LS is more
practical than C2 in many circumstances.

%

\section{Benchmarks}
\label{sec:benchmarks}
We ran various experiments to compare the different variants of the
algorithm C2 between themselves and to the other algorithms. All the
experiments were run on four dual-core Intel Xeon E5430 (2.6GHz),
eventually using the parallelised version of the algorithm.

\begin{figure}
  \centering
  \includegraphics[width=0.9\textwidth]{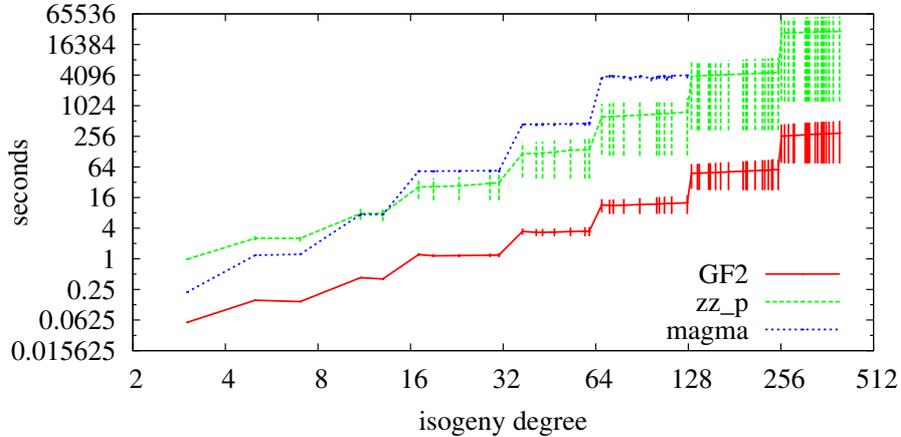}
  \caption{Comparative timings for different implementations of C2-AS-FI-MC with curves defined over $\F_{2^{101}}$. Plot in logarithmic scale.}
  \label{fig:2-101}
\end{figure}

The first set of experiments was run to evaluate the benefits of using
the fast algorithms in~\cite{DFS09}. We selected pairs of isogenous
curves over $\F_{2^{101}}$ such that the height of the tower is
maximal (observe that this is always the case for cryptographic
curves). The library \texttt{FAAST} offers two types for finite field
arithmetics in characteristic $2$: \texttt{zz\_p} which is a generic
type for word-precision $p$ and \texttt{GF2} which uses the optimised
algorithms of the library \texttt{gf2x}. We compared implementations
of C2-AS-FI-MC using these two types with an implementation written in
Magma. The results are in figure~\ref{fig:2-101}: we plot a line for
the average running time of the algorithm and bars around it for
minimum and maximum execution times of the final loop. Besides the
dramatic speedup obtained by using the ad-hoc type \texttt{GF2}, the
algorithmic improvements of \texttt{FAAST} over Magma are evident as
even \texttt{zz\_p} is one order of magnitude faster.

\begin{table}
  \centering
  \begin{tabular}{r r r r r r r r}
    \hline
    $\ell$ & $E[p^k]$ & $E'[p^k]$ & FI & RFR & MC & Avg tries & Avg loop time\\
    \hline
    31 & 1.3128 & 1.3128 & 1.1058 & 0.00218 & 0.00218 & 64 & 0.279\\
    61 & 3.5454 & 3.5464 & 2.5236 & 0.00783 & 0.00900 & 128 & 2.154 \\
    127 & 9.2975 & 9.3026 & 5.6881 & 0.03147 & 0.03634 & 256 & 17.359 \\
    251	& 23.7984 & 23.7984 & 12.7251 & 0.12415 & 0.14519 & 512 & 137.902 \\
    397 & 59.7439 & 59.7579 & 28.3387 & 0.36822 & 0.58027 & 1024 & 971.254 \\
    \hline
  \end{tabular}
  \caption{Comparative timings for the phases of C2-AS-FI-MC for curves over $\F_{2^{101}}$.}
  \label{tab:C2}
\end{table}

Table~\ref{tab:C2} shows detailed timings for each phase of
C2-AS-FI-MC. The column FI reports the time for one interpolation, the
column MC the time for one modular composition; comparing these two
columns the gain from passing from C2-AS-FI to C2-AS-FI-MC is
evident. Columns RFR (rational fraction reconstruction) and MC
constitute the Cauchy interpolation step that is repeated in the final
loop. The last column reports the average time spent in the loop: it
is by far the most expensive phase and this justifies the attention we
paid to FI and MC; only on some huge examples we approached the
crosspoint between these two algorithms.

\begin{figure}
  \centering
  \includegraphics[height=0.45\textwidth]{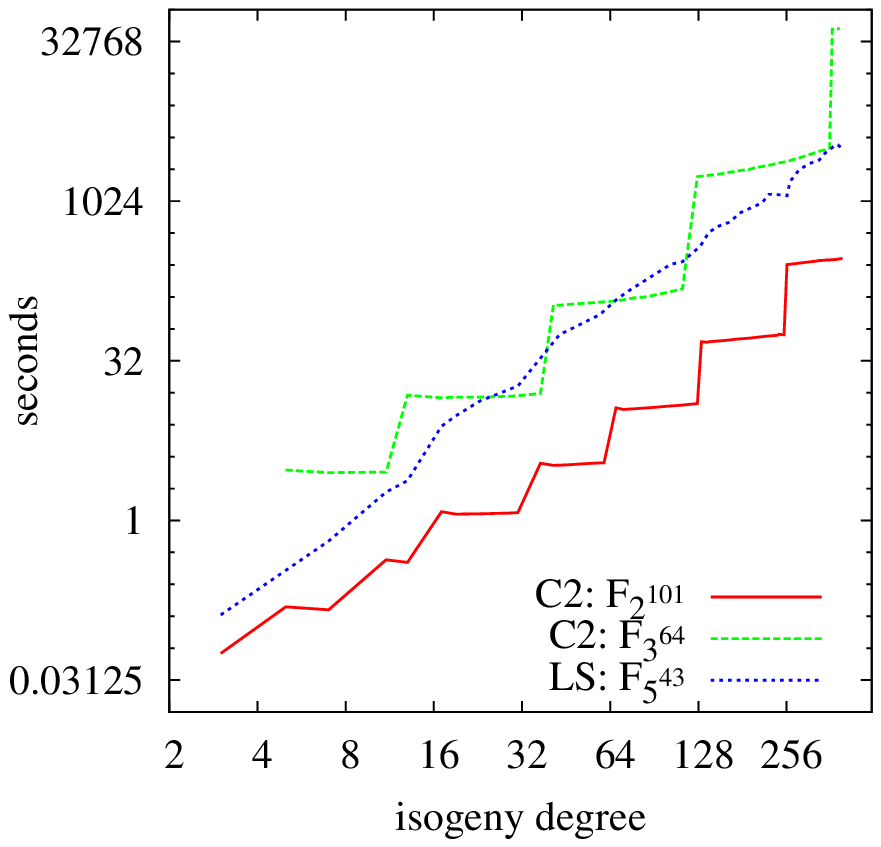}
   \includegraphics[height=0.45\textwidth]{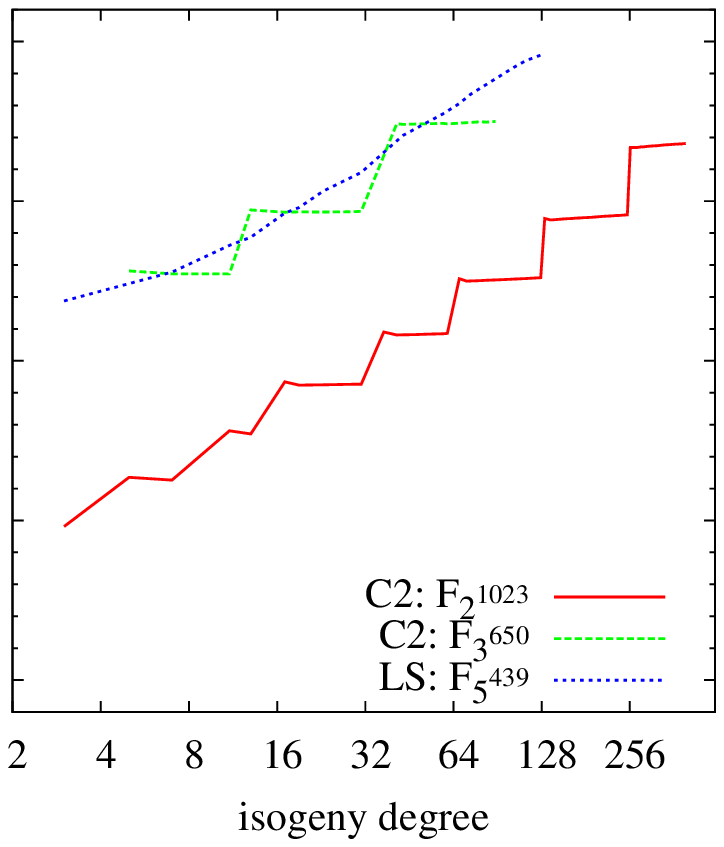}
   \caption{Comparative timings for C2-AS-FI-MC (C2) and LS over
     different curves. Plot in logarithmic scale.}
  \label{fig:comp}
\end{figure}

Next, we compare the running times of C2-AS-FI-MC and LS over curves
of half the cryptographic size in figure~\ref{fig:comp} (left). We
only plot average times for C2, in characteristic $2$ we only plot the
timings for \texttt{GF2}. From the plot it is clear that C2-AS-FI-MC
only performs better than LS for $p=2$, but in this case the algorithm
of~\cite{Ler96} is by far better.  Figure~\ref{fig:comp} (right) shows
that LS slowly gets worse than C2, however comparing a Magma prototype
to our highly optimised implementation of C2-AS-FI-MC is somewhat
unfair and probably the crosspoint between the two algorithms lies
much further. Furthermore, it is unlikely that C2-AS-FI-MC could be
practical for any $p>3$ because of its high dependence on $p$, while
LS scales pretty well with the characteristic as shown in
figure~\ref{fig:LSp}.

\begin{figure}
  \centering
  \includegraphics[width=0.9\textwidth]{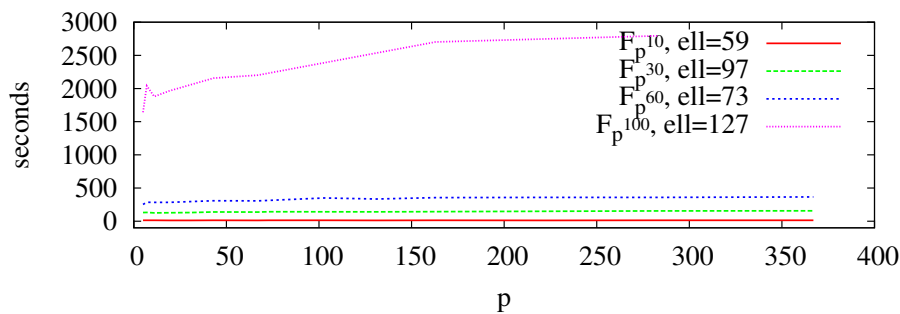}
  \caption{Timings for LS for different fields. We increase
    $p$ while taking constant $d$ and the isogeny degree.}
  \label{fig:LSp}
\end{figure}

We can hardly hide our disappointment concluding that, despite their
good asymptotic behaviour and our hard work implementing them, the
variants derived from C2 don't seem to have any practical application,
at least for present data sizes. We hope that in the future the
algorithms presented here may turn useful to compute very large data
that are currently out of reach.

%

\section*{Acknowledgements}
We would like to thank J.-M.~Couveignes, F.~Morain, E.~Schost and
B.~Smith for useful discussions and precious proof-reading.

\end{document}